\newcommand{\arrowlabelsep}{4pt}
\DeclareMathOperator{\im}{im}
\DeclareMathOperator{\rank}{rank}
\newcolumntype{C}{>{\raggedleft\arraybackslash}X} % centered version of "X" type
\let\originalleft\left
\let\originalright\right
\renewcommand{\left}{\mathopen{}\mathclose\bgroup\originalleft}
\renewcommand{\right}{\aftergroup\egroup\originalright}
\newtheorem{theorem}{Theorem}[section]
\newtheorem{lemma}[theorem]{Lemma}
\newtheorem{proposition}[theorem]{Proposition}
\theoremstyle{definition}
\newtheorem{definition}[theorem]{Definition}
\newtheorem{remark}[theorem]{Remark}
\newtheorem{example}[theorem]{Example}
\newtheorem*{theorem*}{Theorem}
\newtheorem*{conjecture*}{Conjecture}
\DeclarePairedDelimiterX{\set}[1]{\{}{\}}{\setargs{#1}}
\NewDocumentCommand{\setargs}{>{\SplitArgument{1}{;}}m}
{\setargsaux#1}
\NewDocumentCommand{\setargsaux}{mm}
{\IfNoValueTF{#2}{#1} {#1\nonscript\:\delimsize\vert\allowbreak\nonscript\:\mathopen{}#2}}%
\algrenewcommand\textproc{}
\title{On the Integral Cohomology of Fano Varieties of Linear Subspaces} % An Integral-Cohomology Lefschetz Theorem for Fano Varieties of Linear Subspaces Contained in Complete Intersections
\author{Benjamin E. \textsc{Diamond} \\
% 	\small Unaffiliated \\
	\scriptsize \href{mailto:benediamond@gmail.com}{\texttt{benediamond@gmail.com}}
% 	\and
% 	Angus \textsc{Gruen} \\
% 	\small Zero Latency Labs \\
% 	\scriptsize \href{mailto:angusgruen@gmail.com}{\texttt{angusgruen@gmail.com}}
}
\date{}
\newcommand{\underscore}[0]{{\rule[0ex]{0.5em}{0.6pt}}}
\begin{document}
\maketitle

\begin{abstract}
For each $n$, each dimension $r$, and each subscheme $X \subset \mathbb{P}^n$ defined as the common zero-locus of $s$ hypersurfaces, of degrees $\mathbf{d} = (d_1, \ldots , d_s)$ say,
the \textit{Fano variety} $F_r(X)$ of projective $r$-spaces contained in $X$ is a subvariety of the Grassmannian $G(r + 1, n + 1)$. We prove that % whenever $F_r(X)$ is smooth and of the expected dimension, % where do we use "smooth and of the expected dimension"?
the inclusion $F_r(X) \subset G(r + 1, n + 1)$ induces an isomorphism $H^i(G(r + 1, n + 1); \mathbb{Z}) \rightarrow H^i(F_r(X); \mathbb{Z})$ on \textit{integral} cohomology for certain indices $i$ (i.e., depending only on $n$, $r$, $s$ and $\mathbf{d}$). Our result extends to the integral setting a result proved for rational cohomology by Debarre and Manivel (Math.\ Ann.\ '98), and answers a question of Benoist and Voisin. Our techniques adapt ones introduced by Tu (Trans.\ Am.\ Math.\ Soc.\ '89) for a different purpose.

\end{abstract}

\section{Introduction}

In a now-classic work, Debarre and Manivel \cite[Thm.~3.4]{Debarre:1998aa} prove---among many other things---that, over the complex numbers, for each dimension $n$, each subspace dimension $r$, each multidegree $\mathbf{d} = (d_1, \ldots , d_s)$, and each complete intersection $X \subset \mathbb{P}^n$ of multidegree $\mathbf{d}$ for which the \textit{Fano variety} $F_r(X)$ of projective $r$-spaces contained in $X$ is of the expected dimension and smooth, the restriction map $H^i(G(r + 1, n + 1), \mathbb{Q}) \rightarrow H^i(F_r(X), \mathbb{Q})$ on degree-$i$ rational cohomology induced by the inclusion $F_r(X) \subset G(r + 1, n + 1)$ is an isomorphism whenever $i \in \{0, \ldots , \delta\underscore - 1\}$ is small enough (here, $\delta\underscore := \delta\underscore(n, \mathbf{d}, r)$ is a constant that depends only on $n$, $r$, $s$ and $\mathbf{d}$).

In this work, we prove that \cite[Thm.~3.4]{Debarre:1998aa} holds over $\mathbb{Z}$, too. In fact, we relax the setting of \cite[Thm.~3.4]{Debarre:1998aa} in various further ways. We don't assume that the forms $\mathbf{f} = (f_1, \ldots , f_s)$ are general, or that their common zero-locus $X \subset \mathbb{P}^n$ is smooth or of the expected dimension. Further, we refrain from demanding that $F_r(X)$ itself be smooth or of the expected dimension (it could be singular, or even nonreduced).

Our techniques are quite different from those used by Debarre and Manivel; instead, they adapt ones used by Tu \cite{Tu:1989aa} in a rather different context. Tu relates his primary object of study---the rank-$2 \cdot (n - r)$ degeneracy locus of a symmetric bundle map $V \otimes V  \to \mathbb{C}$ over a variety, where $V$ is of rank $n + 1$, say---to the vanishing locus in $G$, the Grassmann bundle of $r + 1$-dimensional subspaces of fibers of $V$, of a certain section of $\text{Sym}^2 S^*$, where $S$ is the tautological sub-bundle over $G$. Tu establishes the connectedness of the degeneracy locus at issue by linking it to the vanishing of the top two cohomology groups of the \textit{complement} in $G$ of that latter vanishing locus.

If $S^*$, and by extension its symmetric powers, were ample over $G(r + 1, n + 1)$, then Tu's result---and ours---would be essentially immediate (see e.g.\ Lazarsfeld \cite[Thm.~7.1.1]{Lazarsfeld:2004ab}), a fact Tu already notes. Tu's technique yields a kind of workaround for this problem; indeed, Tu instead establishes the required cohomology vanishing using a key cohomology comparison lemma \cite[Lem.~3.6]{Tu:1989aa} based on the Leray spectral sequence, applying that lemma to a certain induced map $h : \mathbb{P}(\text{Sym}^2 S) \to \mathbb{P}(\text{Sym}^2 V)$. To apply his lemma, Tu must study $h$ in detail, and in particular must stratify the target space $\mathbb{P}(\text{Sym}^2 V)$ by fiber dimension, analyzing both the dimensions of the strata and the dimensions of their associated fibers.

Tu's apparatus perfectly suits our problem, though it originates as an auxiliary tool in the service of his. That is, we have no base variety or degeneracy locus in Tu's sense. Tu's vanishing locus in $G$ is merely a means to his end; in our setting, it's nothing other than our object of immediate interest, the Fano variety $F_r(X) \subset G(r + 1, n + 1)$, which arises in $G$ as the vanishing locus of a section of $\text{Sym}^\mathbf{d} S^*$. We must simply replace Tu's map with its higher-degree analogue $h : \mathbb{P}(\text{Sym}^\mathbf{d} S) \to \mathbb{P}(\text{Sym}^\mathbf{d} V)$; moreover, we must assert the vanishing of the top $\delta\underscore + 1$---as opposed to the top two---cohomology groups of the appropriate open subset of $G$ (we describe our adaptation of Tu's techniques further in Section \ref{tu's} below).

Our map $h : \mathbb{P}(\text{Sym}^\mathbf{d} S) \to \mathbb{P}(\text{Sym}^\mathbf{d} V)$ presents new challenges. In the classical case $\mathbf{d} = (2)$, elements $[\varphi] \in \mathbb{P}(\text{Sym}^2 V)$ are essentially symmetric bilinear forms; the dimension of each fiber $h^{-1}\left( \{[\varphi]\}\right)$ depends precisely on $\text{rank}(\varphi)$, and the loci in $\mathbb{P}(\text{Sym}^2 V)$ consisting of those $[\varphi]$ for which $\rank (\varphi)$ is prescribed are easy to understand. In the setting of general $\mathbf{d}$, we must find a suitable generalization of the notion ``rank'', and again analyze the ensuing strata and their associated fiber dimensions. To do this, we make use of \textit{apolarity}, a classical notion reprised in various modern treatments (see e.g.\ Iarrobino and Kanev \cite[Def.~1.11]{Iarrobino:1999aa}). For each $\bm{\varphi} \in \text{Sym}^\mathbf{d} V$, we write $A(\bm{\varphi}) \subset V^*$ for the set of forms $\ell \in V^*$ \textit{apolar} to $\bm{\varphi}$, in the sense that they annihilate $\bm{\varphi}$ under the \textit{contraction action} $\ell : \bm{\varphi} \mapsto \ell \left( \frac{\partial}{\partial X_0}, \ldots , \frac{\partial}{\partial X_n} \right) \circ \bm{\varphi}$ (this is essentially a directional derivative). For $\bm{\varphi}$ of arbitrary multidegree $\mathbf{d}$, our $A(\bm{\varphi}) \subset V^*$ plays precisely the rôle played in the special case $\mathbf{d} = (2)$ by $\text{ker}\left( \varphi : V^* \to V \right)$, and our $\text{dim}(A(\bm{\varphi})^\perp)$ exactly generalizes $\text{rank}(\varphi)$ (we discuss this fact further in Example \ref{tu_specialize}). % Our approach, in a sense, reveals new power in Tu's original technique, since we establish the vanishing of $\delta\underscore + 1$ cohomology groups (as opposed to just $2$).

% To show that Tu's cohomology comparison lemma \cite[Lem.~3.6]{Tu:1989aa} applies in our setting, we must finally do a bit of combinatorial work (see Proposition \ref{final} below).

Our work answers a question of Benoist and Voisin, who have asked whether the result of this paper holds in the special case $s = 1$, $r = 1$, $\mathbf{d} = (n)$, and $\mathbf{f} = (f_1)$ general. In this case, $X \subset \mathbb{P}^n$ is a smooth, degree-$n$ hypersurface, and $F_1(X) \subset G(1 + 1, n + 1)$, the Fano variety of lines contained in $X$, is smooth of dimension $\delta\underscore = n - 3$; moreover, $\delta\underscore = \delta = n - 3$ in this case, so that one expects isomorphisms $H^i(G(1 + 1, n + 1); \mathbb{Z}) \to H^i(F_1(X); \mathbb{Z})$ for each $i \in \{0, \ldots , \delta - 1\}$ below the middle degree of $F_1(X)$. Our work also represents a kind of progress towards Debarre and Manivel's question \cite[Rem.~3.2.~3)]{Debarre:1998aa}, which predicts the relative \textit{homotopy} vanishing $\pi_i\left( G(r + 1, n + 1), F_r(X) \right) = 0$ for each $i \in \{0, \ldots , \delta\underscore \}$.

By setting $r = 0$ throughout, we obtain---as an unexpected side effect of our work---a new proof of the weak Lefschetz theorem for intersections of ample divisors (see Remark \ref{new} below). % Indeed, our result places that theorem and ours into a common framework, and yields a single proof, adequate for both.

\paragraph{Our result.}

We work over $\mathbb{C}$ throughout. Our goal is to prove the following theorem. As in \cite{Debarre:1998aa}, we fix a dimension $n$, a subspace dimension $r$, and a multidegree $\mathbf{d} = (d_1, \ldots , d_s)$. Throughout, we let $V$ be an $n + 1$-dimensional $\mathbb{C}$-vector space. Again as in \cite{Debarre:1998aa}, we use the notational device ${\mathbf{d} + r \choose r} \coloneqq \sum_{i = 1}^s {d_i + r \choose r}$, and write $\text{Sym}^\mathbf{d} V^* \coloneqq \bigoplus_{i = 1}^s \text{Sym}^{d_i} V^*$. We write $\delta \coloneqq (r + 1) \cdot (n - r) - {\mathbf{d} + r \choose r}$ for the expected dimension of $F_r(X) \subset G(r + 1, n + 1)$. Finally, we write $\delta\underscore \coloneqq \min(\delta, n - 2r - s)$, exactly as in \cite{Debarre:1998aa}.

\begin{theorem} \label{main}
% If $X \subset \mathbb{P}^n$ is of multidegree $\mathbf{d}$ and if $F_r(X) \subset G(r + 1, n + 1)$ is smooth of the expected dimension $\delta$, % where do we use "smooth and of the expected dimension"?
For each $n$, $r$, and $s$, each multidegree $\mathbf{d} = (d_1, \ldots , d_s)$ and each $\mathbf{f} = (f_1, \ldots , f_s)$ in $\text{Sym}^\mathbf{d} V^*$, with common zero locus $X \subset \mathbb{P}^n$ say, the restriction map $H^i(G(r + 1, n + 1); \mathbb{Z}) \to H^i(F_r(X); \mathbb{Z})$ induced by the inclusion $F_r(X) \subset G(r + 1, n + 1)$ is an isomorphism for $i \in \{0, \ldots , \delta\underscore - 1\}$ and injective when $i = \delta\underscore$.
\end{theorem}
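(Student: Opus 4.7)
The plan is to adapt Tu's projective-bundle strategy \cite{Tu:1989aa}, using apolarity as the higher-degree analogue of rank.

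\textbf{Set-up.} I realize $F_r(X) \subset G := G(r + 1, n + 1)$ as the zero locus of the section $\sigma_\mathbf{f}$ of $\text{Sym}^\mathbf{d} S^*$ obtained by fibrewise restriction of $\mathbf{f}$, and set $U := G \setminus F_r(X)$. Introduce the projective bundle $\pi : \mathbb{P} := \mathbb{P}(\text{Sym}^\mathbf{d} S) \to G$ together with the morphism $h : \mathbb{P} \to \mathbb{P}(\text{Sym}^\mathbf{d} V)$ arising from the tautological inclusion $\text{Sym}^\mathbf{d} S \hookrightarrow \text{Sym}^\mathbf{d} V$. The section $\sigma_\mathbf{f}$ lifts to a section of $\mathcal{O}_\mathbb{P}(1)$ whose zero locus $Z$ coincides with $h^{-1}(H_\mathbf{f})$, where $H_\mathbf{f} \subset \mathbb{P}(\text{Sym}^\mathbf{d} V)$ is the hyperplane cut out by $\mathbf{f}$. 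Since $\sigma_\mathbf{f}$ is nonvanishing on $U$, the restriction $\pi|_{\mathbb{P} \setminus Z} : \mathbb{P} \setminus Z \to U$ is a Zariski-locally trivial $\mathbb{A}^{N - 1}$-bundle with $N = \binom{\mathbf{d} + r}{r}$, so $H_*(U; \mathbb{Z}) \cong H_*(\mathbb{P} \setminus Z; \mathbb{Z}) = H_*(h^{-1}(\mathbb{A}); \mathbb{Z})$, where $\mathbb{A} := \mathbb{P}(\text{Sym}^\mathbf{d} V) \setminus H_\mathbf{f}$ is an affine space.

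\textbf{Reduction to a vanishing.} By the long exact sequence of the pair $(G, F_r(X))$, Theorem \ref{main} follows as soon as $H^i(G, F_r(X); \mathbb{Z}) = 0$ for $i \leq \delta\underscore$. Since $G$ is compact, excision gives $H^i(G, F_r(X); \mathbb{Z}) \cong H^i_c(U; \mathbb{Z})$, which by Poincar\'e duality on the smooth oriented complex-$d$-dimensional manifold $U$ (with $d := (r + 1)(n - r)$) equals $H_{2d - i}(U; \mathbb{Z})$. The theorem thus reduces to the vanishing of the top $\delta\underscore + 1$ integral homology groups of $U$, equivalently of $h^{-1}(\mathbb{A})$.

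\textbf{Tu's method, adapted.} The morphism $h$ is proper, and the affine target $\mathbb{A}$ is contractible. For $\bm\varphi \in \text{Sym}^\mathbf{d} V$, the fiber $h^{-1}([\bm\varphi])$ is identified with the Schubert-type sub-Grassmannian $\{W \in G : W \supset A(\bm\varphi)^\perp\} \subset G$, where $A(\bm\varphi) \subset V^*$ is the apolar subspace of \cite[Def.~1.11]{Iarrobino:1999aa}. I stratify $\mathbb{P}(\text{Sym}^\mathbf{d} V)$ by $T_k := \{[\bm\varphi] : \dim A(\bm\varphi)^\perp = k\}$ and compute the dimensions of $T_k$ and of the fiber above it as explicit functions of $k$; feeding these estimates into the higher-degree analogue of Tu's cohomology comparison lemma \cite[Lem.~3.6]{Tu:1989aa}, applied to the Leray spectral sequence of $h|_{h^{-1}(\mathbb{A})}$, then yields the required top-degree vanishing of $H_*(h^{-1}(\mathbb{A}); \mathbb{Z})$.

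\textbf{Main obstacle.} The technical heart will be the stratum- and fiber-dimension analysis. In Tu's setting $\mathbf{d} = (2)$, rank suffices, and the strata and fibers are classical; for general $\mathbf{d}$, the strata $T_k$ are secant-variety-like subsets of a Veronese configuration and the fibers are sub-Grassmannians depending on the $k$-dimensional subspace $A(\bm\varphi)^\perp \subset V$. Extracting the sharp threshold $\delta\underscore = \min(\delta, n - 2r - s)$---as opposed to a weaker bound---will hinge on delicate combinatorics involving $\binom{\mathbf{d} + r}{r}$ together with the Schubert calculus of $G(r + 1, n + 1)$; the $n - 2r - s$ term should emerge from the stratum in which $A(\bm\varphi)^\perp$ attains maximal dimension.
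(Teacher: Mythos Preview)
Your plan is essentially the paper's proof: Tu's projective-bundle trick, the reduction via duality to top-degree vanishing on $U$, the apolarity stratification by $\dim A(\bm\varphi)^\perp$, the identification of fibers with sub-Grassmannians, and Tu's comparison lemma all match exactly. For the step you flag as the main obstacle, the paper bounds the stratum dimensions not via secant varieties or Schubert calculus but via the simple incidence correspondence $I_k = \{(W,[\bm\varphi]) : \bm\varphi \in \text{Sym}^{\mathbf d}W\}$ over $G(r+1-k,n+1)$, giving $\dim Y_k \le (r+1-k)(n-r+k) + \binom{\mathbf d + r - k}{r-k} - 1$; the resulting inequality then reduces \emph{verbatim} to Debarre--Manivel's formula \cite[(2.9)]{Debarre:1998aa}, proved by a discrete concavity argument. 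One small correction to your intuition: the $n-2r-s$ term arises from the stratum where $\dim A(\bm\varphi)^\perp$ is \emph{minimal} (equal to $1$), not maximal---the $\delta$ term is the one coming from the generic stratum $\dim A(\bm\varphi)^\perp = r+1$.
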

We don't assume that $X$ is general or that $F_r(X)$ is smooth or of the expected dimension.

\section{Tu's Approach} \label{tu's}

Here, we reproduce in our context the apparatus developed by Tu \cite{Tu:1989aa}. The situation is almost identical. We just specialize Tu's base variety (which he calls $X$) to $\text{Spec} (\mathbb{C})$, and replace $\text{Sym}^2$ with $\text{Sym}^\mathbf{d}$ everywhere.

We begin by fixing a \textit{nonzero} $s$-tuple $\mathbf{f} = (f_1, \ldots , f_s) \in \text{Sym}^\mathbf{d} V^*$ of homogeneous polynomials, just as in \cite[\S~1]{Debarre:1998aa}. (If $\mathbf{f} = 0$, then our result, though trivial, is still true; in this case, the inclusion $F_r(X) \hookrightarrow G(r + 1, n + 1)$ is the identity, and the restriction $H^i(G(r + 1, n + 1), \mathbb{Z}) \to H^i(F_r(X))$ is an isomorphism for \textit{each} $i \in \{0, \ldots , 2 \cdot \dim G(r + 1, n + 1)\}$.)

In \cite[\S~2]{Tu:1989aa}, Tu explains how to associate, to each section $s$ of a rank-$e$ vector bundle $E$, a further section $s^*$ of the line bundle $\mathscr{O}(1)$ over $\mathbb{P}(E^*)$. % Moreover, $H^*(\mathbb{P}(E^*) - Z(s^*); \mathbb{Z}) \cong H^*(Y - Z(s); \mathbb{Z})$ \cite[Cor.~2.2]{Tu:1989aa}.
Exactly as Tu does, we apply this construction twice. First, we view $\mathbf{f}$ as a section of $\text{Sym}^\mathbf{d} V^*$, the trivial vector bundle over $\text{Spec}(\mathbb{C})$ (our base variety is trivial, while Tu's isn't). Applying \cite[\S~2]{Tu:1989aa}, we obtain a corresponding section $\mathbf{f}^*$ of $\mathscr{O}(1)$ over $\mathbb{P}\left( \left(\text{Sym}^\mathbf{d} V^*\right)^* \right) = \mathbb{P}\left( \text{Sym}^\mathbf{d} V \right)$.

Moreover, just as in both \cite[Rem.~2.10]{Debarre:1998aa} and \cite[\S~3]{Tu:1989aa}, $\mathbf{f}$ lifts to a section $\pi^*(\mathbf{f})$ of $\pi^*\left( \text{Sym}^\mathbf{d}V^* \right)$, which in turn induces, by restriction, a further section, say $\bm{\mathfrak{f}}$, of $\text{Sym}^\mathbf{d}(S^*)$. Here, $S$ is the rank-$r + 1$, tautological sub-bundle over $G(r + 1, n + 1)$. The Fano variety $F_r(X) \subset G(r + 1, n + 1)$ is exactly the vanishing locus $Z(\bm{\mathfrak{f}}) \subset G(r + 1, n + 1)$. Applying \cite[\S~2]{Tu:1989aa} again, we obtain from $\bm{\mathfrak{f}}$ a further section, say $\bm{\mathfrak{f}}^*$, of the line bundle $\mathscr{O}(1)$ over the projectivization $\mathbb{P}\left( \left( \text{Sym}^\mathbf{d} S^* \right)^* \right) = \mathbb{P}\left( \text{Sym}^\mathbf{d} S \right)$.

We summarize this state of affairs in the following figure, which essentially reproduces Tu's \cite[(3.4)]{Tu:1989aa} verbatim. As Tu does, we use doubled arrows to denote sections.

\begin{figure}
\centering
\begin{tikzpicture} % [>=latex]

  \node (Ptop) at (0, 4) {$\mathbb{P}\left( \text{Sym}^\mathbf{d} S \right) - Z(\bm{\mathfrak{f}}^*) \subset \mathbb{P}\left( \text{Sym}^\mathbf{d} S \right)$};

  \node (G) at (0, 1) {$G(r + 1, n + 1) - Z(\bm{\mathfrak{f}}) \subset G(r + 1, n + 1)$};
  \draw[->] ($(Ptop.south west) + (2, 0)$) -- node[left=\arrowlabelsep,align=right] {same\\cohomology} ($(G.north west -| Ptop.south west) + (2, 0)$);
  \draw[->] ($(G.north east |- Ptop.south west) + (-1.5, 0)$) -- ($(G.north east) + (-1.5, 0)$);
  
  \node (new) at ($(G.north east |- Ptop.west) + (-1.5, 1.5)$) {$\mathscr{O}(1)$};  
  
  \draw[->] ($(new.south) + (-0.1, 0)$) -- ($(Ptop.north -| new.south) + (-0.1, 0)$);
  \draw[->] ($(Ptop.north -| new.south) + (0.1, 0)$) -- node[right=\arrowlabelsep] {$\mathfrak{f}^*$} ($(new.south) + (0.1, 0)$);

  \node (piSymE) at (3, 3) {$\pi^* \left( \mathrm{Sym}^\mathbf{d} V^* \right)$};
  \node (SymS)   at (5.5, 3) {$\mathrm{Sym}^\mathbf{d} S^*$};
  \draw[->] (piSymE) -- (SymS);
  \draw[->] ($(piSymE.south) + (-0.4, 0)$) -- ($(G.north east -| piSymE.south) + (-0.4, 0)$);
  \draw[->] ($(G.north east -| piSymE.south) + (-0.2, 0)$) -- node[right] {$\pi^*(\mathbf{f})$} ($(piSymE.south) + (-0.2, 0)$);
  \draw[->] (SymS.south) -- (G.north east);
  \draw[->] ($(G.north east) + (0.4, 0)$) -- node[below=\arrowlabelsep] {$\bm{\mathfrak{f}}$} ($(SymS.south) + (0.4, 0)$);

  \node (X)    at (4, -1.3) {$\text{Spec}\left( \mathbb{C} \right)$};
  \node (SymE) at (4, 0.2)  {$\mathrm{Sym}^\mathbf{d} V^*$};
  \draw[->] ($(G.south east) + (-1.5, 0)$) -- node[left=\arrowlabelsep] {$\pi$} (X);
  \draw[->] ($(SymE.south) + (-0.1, 0)$) -- ($(X.north) + (-0.1, 0)$);
  \draw[->] ($(X.north) + (0.1, 0)$) -- node[right=\arrowlabelsep] {$\mathbf{f}$} ($(SymE.south) + (0.1, 0)$);

  \node (PSymE) at (9.5, 0.5) {$\mathbb{P}\left( \mathrm{Sym}^\mathbf{d} V \right) \supset \mathbb{P}\left( \mathrm{Sym}^\mathbf{d} V \right) - Z(\mathbf{f}^*)$};
  \draw[->] ($(PSymE.south) + (-1.5, 0)$) -- (X.north east);

  \node (O1) at (8, 2) {$\mathscr{O}(1)$};
  \draw[->] ($(O1.south) + (-0.1, 0)$) -- ($(PSymE.north -| O1.south) + (-0.1, 0)$);
  \draw[->] ($(PSymE.north -| O1.south) + (0.1, 0)$) -- node[right=\arrowlabelsep] {$\mathbf{f}^*$} ($(O1.south) + (0.1, 0)$);

  %--- big arc h from top P to right P(Sym^2E) ---%
\draw[->]
  ($(Ptop.north west) + (2,0)$)
  to[bend left=65]
  node[above=\arrowlabelsep] {$h$}
  ($(PSymE.north) + (1.5,0)$);
  %   \draw[->] ($(Ptop.north west) + (2, 0)$) .. controls +(3,4) and +(-1, 2) .. node[above=\arrowlabelsep] {$h$} ($(PSymE.north) + (1.5, 0)$);
\end{tikzpicture}
% \caption{The map $h$ maps $\mathbb{P}\left( \text{Sym}^\mathbf{d} S \right) - Z(\bm{\mathfrak{f}}^*)$ to an affine variety.} \label{figure}
\end{figure}

We're interested in the cohomology of $Z(\bm{\mathfrak{f}}) = F_r(X)$. On the other hand, we have the following topological statement, which adapts \cite[Lem.~3.3]{Tu:1989aa}. We abbreviate $G \coloneqq G(r + 1, n + 1)$ and $F \coloneqq F_r(X)$, and take all homology and cohomology over $\mathbb{Z}$.
\begin{lemma} \label{topological}
To prove Theorem \ref{main}, it suffices to prove that
\begin{equation*}H^i\left( G - F; \mathbb{Z}\right) = 0\end{equation*}
holds for each $i \in \{2 \cdot \dim G - \delta\underscore, \ldots , 2 \cdot \dim G\}$.
\end{lemma}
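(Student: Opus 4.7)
The plan is to combine the long exact sequence of the pair $(G, F)$ with Lefschetz duality on the closed oriented manifold $G$, converting the lemma's cohomological hypothesis on $G - F$ into the relative cohomology vanishing needed for Theorem \ref{main}; the skeleton is exactly that of Tu's proof of \cite[Lem.~3.3]{Tu:1989aa}, extended to the larger range relevant for our generalization. From the long exact sequence
$$\cdots \to H^{i-1}(F) \to H^i(G, F) \to H^i(G) \to H^i(F) \to H^{i+1}(G, F) \to \cdots,$$
one reduces to establishing $H^i(G, F; \mathbb{Z}) = 0$ for every $i \in \{0, \ldots, \delta\underscore\}$: joint vanishing at $i$ and $i + 1$ makes $H^i(G) \to H^i(F)$ an isomorphism for $i \in \{0, \ldots, \delta\underscore - 1\}$, while vanishing at $\delta\underscore$ alone forces injectivity there.

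Next, since $G$ is a compact complex manifold of real dimension $N := 2 \dim G$ and $F \subset G$ is a closed analytic subvariety, Lefschetz duality supplies an isomorphism
$$H^i(G, F; \mathbb{Z}) \cong H_{N - i}(G - F; \mathbb{Z}),$$
further reducing the goal to showing $H_j(G - F; \mathbb{Z}) = 0$ for every $j \in \{N - \delta\underscore, \ldots, N\}$. The top degree is immediate: assuming $F \ne \emptyset$ (if $F = \emptyset$, then $G - F = G$ and the hypothesis $H^N(G) = 0$ already fails, forcing $\delta\underscore < 0$ and making the lemma vacuous), no connected component of $G - F$ is compact, since any such component would exhaust the connected manifold $G$; hence $H_N(G - F; \mathbb{Z}) = 0$. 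For $j < N$, I would translate the hypothesized cohomological vanishing into homological vanishing via the universal coefficient sequence
$$0 \to \operatorname{Ext}^1(H_{j - 1}(G - F), \mathbb{Z}) \to H^j(G - F; \mathbb{Z}) \to \operatorname{Hom}(H_j(G - F), \mathbb{Z}) \to 0,$$
which shows that $H^j(G - F; \mathbb{Z}) = 0$ forces $H_j(G - F; \mathbb{Z})$ to be torsion and $H_{j - 1}(G - F; \mathbb{Z})$ to be torsion-free; invoking the hypothesis at both degrees $j$ and $j + 1$, for $j \in \{N - \delta\underscore, \ldots, N - 1\}$, shows that $H_j(G - F; \mathbb{Z})$ is simultaneously torsion and torsion-free, hence zero.

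The main subtle point is this last step: the input is cohomological but Lefschetz duality outputs homology, so translating across requires the universal coefficient sequence together with a careful comparison at adjacent degrees, plus a separate non-compactness argument at the top degree $j = N$. Once these pieces are in place, the reduction from the lemma's hypothesis to Theorem \ref{main} is a formal consequence of the long exact sequence of $(G, F)$ and Lefschetz duality.
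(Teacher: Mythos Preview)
Your proof is correct and uses the same three ingredients as the paper---Lefschetz duality, the universal coefficient theorem, and the long exact sequence of the pair---but applies them in a different order and with the dual form of Lefschetz duality. The paper invokes the isomorphism $H^{N-i}(G-F) \cong H_i(G,F)$, so that the hypothesis immediately yields $H_i(G,F) = 0$ for $i \in \{0, \ldots, \delta\underscore\}$; the universal coefficient sequence for the pair then gives $H^i(G,F) = 0$ in the same range, with the boundary case $i = 0$ handled for free by $H_{-1} = 0$. Your route instead uses $H^i(G,F) \cong H_{N-i}(G-F)$, which forces you to convert the cohomological hypothesis on $G-F$ into homological vanishing on $G-F$ via the universal coefficient sequence for $G-F$; this is what creates your separate non-compactness argument at the top degree $j = N$ (which, incidentally, could also be dispatched by observing that $H^{N+1}(G-F) = 0$ for dimension reasons, so the same torsion/torsion-free trick works there too). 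The paper's ordering is marginally more economical, but both arguments are sound.
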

\begin{proof}
We proceed as \cite[Lem.~3.3]{Tu:1989aa} does. By Lefschetz duality, for each $i \in \{0, \ldots , 2 \cdot \dim G\}$, we have
\begin{equation*}H^{2 \cdot \dim G - i}(G - F) \cong H_i(G, F);\end{equation*}
in particular, if the hypothesis of the lemma holds, then, for each $i \in \{0, \ldots , \delta\underscore\}$, $H_i(G, F) = 0$ also does. By the universal coefficient theorem for relative cohomology, we have, for each $i \in \{0, \ldots , 2 \cdot \dim G\}$, the exact sequence:
% \begin{equation*}
% \cdots \to H_{i + 1}(G, F) \to H_i(F) \to H_i(G) \to H_i(G, F) \to \cdots \to H_1(G, F) \to H_0(F) \to H_0(G) \to H_0(G, F) \to 0,
% \end{equation*}
% in exactly this situation, we moreover get isomorphisms $H_i(F) \xrightarrow{\sim} H_i(G)$ for each $i \in \{0, \ldots , \delta\underscore - 1\}$, as well as a surjection $H_i(F) \twoheadrightarrow H_i(G)$ when $i = \delta\underscore$.
\begin{equation*}
0 \to \text{Ext}(H_{i - 1}(G, F), \mathbb{Z}) \to H^i(G, F; \mathbb{Z}) \to \text{Hom}(H_i(G, F), \mathbb{Z}) \to 0.
\end{equation*}
We conclude under the hypothesis of the lemma the relative cohomology vanishing $H^i(G, F; \mathbb{Z}) = 0$ for each $i \in \{0, \ldots , \delta\underscore\}$. Finally, using the long exact sequence of relative cohomology
\begin{equation*}
0 \to H^0(G, F) \to H^0(G) \to H^0(F) \to \cdots \to H^i(G, F) \to H^i(G) \to H^i(F) \to H^{i + 1}(G, F) \to \cdots,
\end{equation*}
we obtain, again under the hypothesis of the lemma, isomorphisms $H^i(G) \xrightarrow{\sim} H^i(F)$ for each $i \in \{0, \ldots , \delta\underscore - 1\}$ and an injection $H^i(G) \hookrightarrow H^i(F)$ for $i = \delta\underscore$; this is exactly the statement of Theorem \ref{main}.
\end{proof}

Moreover, as Tu explains \cite[Cor.~2.2]{Tu:1989aa} (Tu elsewhere \cite[Prop.~3.1]{Tu:1990aa} attributes this result to Sommese \cite[Prop.~1.16]{Sommese:1978aa}), $\mathbb{P}\left( \text{Sym}^\mathbf{d} S \right) - Z(\bm{\mathfrak{f}}^*)$ and $G(r + 1, n + 1) - Z(\bm{\mathfrak{f}})$ have the same cohomology.
\begin{lemma} \label{same}
For each $i \in \{0, \ldots , 2 \cdot \dim G\}$, $H^i\left( \mathbb{P}\left( \mathrm{Sym}^\mathbf{d} S \right) - Z(\bm{\mathfrak{f}}^*); \mathbb{Z} \right) \cong H^i \left( G(r + 1, n + 1) - Z(\bm{\mathfrak{f}}); \mathbb{Z} \right)$.
\end{lemma}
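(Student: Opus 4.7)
The plan is to exhibit the restricted projection $\pi \colon \mathbb{P}(\text{Sym}^{\mathbf{d}} S) - Z(\bm{\mathfrak{f}}^*) \to G - Z(\bm{\mathfrak{f}})$ as a Zariski-locally trivial affine bundle with fibers $\mathbb{A}^{N-1}$, where $N \coloneqq \binom{\mathbf{d}+r}{r}$ is the common rank of $\text{Sym}^{\mathbf{d}} S$ and $\text{Sym}^{\mathbf{d}} S^*$, and then to invoke the cohomological invariance of affine bundles. This is essentially Tu's \cite[Cor.~2.2]{Tu:1989aa}, and the strategy below is the one Tu uses there.

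First I would analyze the fibers of $\pi$. A point of $\mathbb{P}(\text{Sym}^{\mathbf{d}} S)$ is a pair $(x, [v])$ with $x \in G$ and $[v]$ a line in the fiber $(\text{Sym}^{\mathbf{d}} S)_x$, and by the construction of $s \mapsto s^*$ recalled just above, $\bm{\mathfrak{f}}^*(x, [v])$ vanishes precisely when the fiberwise pairing $\langle \bm{\mathfrak{f}}(x), v \rangle$ does. Hence over $x \in Z(\bm{\mathfrak{f}})$ the entire fiber $\pi^{-1}(x) \cong \mathbb{P}^{N-1}$ lies inside $Z(\bm{\mathfrak{f}}^*)$, whereas over $x \in G - Z(\bm{\mathfrak{f}})$ the intersection $Z(\bm{\mathfrak{f}}^*) \cap \pi^{-1}(x)$ is a projective hyperplane, whose complement is isomorphic to $\mathbb{A}^{N-1}$.

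Second I would verify local triviality of the restricted projection. On any Zariski open $U \subset G - Z(\bm{\mathfrak{f}})$ small enough to trivialize $\text{Sym}^{\mathbf{d}} S^*$, and on which some coordinate of $\bm{\mathfrak{f}}|_U$ is everywhere nonvanishing (such $U$ cover $G - Z(\bm{\mathfrak{f}})$), a linear change of basis on $\text{Sym}^{\mathbf{d}} S^*|_U$ turns $\bm{\mathfrak{f}}^*$ into the first homogeneous coordinate on $U \times \mathbb{P}^{N-1}$, whose complement is the standard affine chart $U \times \mathbb{A}^{N-1}$. Hence the restricted $\pi$ is a Zariski-locally trivial $\mathbb{A}^{N-1}$-bundle over $G - Z(\bm{\mathfrak{f}})$, and in particular, in the analytic topology, a fiber bundle with contractible fibers.

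Finally I would apply the Leray spectral sequence of the restricted $\pi$ in singular cohomology. Since $\mathbb{A}^{N-1}$ is contractible, $R^q \pi_* \mathbb{Z} = 0$ for $q > 0$ and $R^0 \pi_* \mathbb{Z} = \mathbb{Z}$, so the $E_2$ page collapses to the single row $H^p(G - Z(\bm{\mathfrak{f}}); \mathbb{Z})$ and yields the claimed isomorphism in every degree. No step is a real obstacle here; the argument essentially copies Tu's verbatim, with $\text{Sym}^2$ replaced by $\text{Sym}^{\mathbf{d}}$ and his base variety specialized to $\text{Spec}(\mathbb{C})$.
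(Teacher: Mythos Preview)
Your proposal is correct and takes essentially the same approach as the paper, which simply cites \cite[Cor.~2.2]{Tu:1989aa} without further detail. You have in effect unpacked that citation, giving the affine-bundle argument that underlies Tu's corollary; nothing in your write-up is off.
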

\begin{proof}
This is simply \cite[Cor.~2.2]{Tu:1989aa}.
\end{proof}

In light of Lemmas \ref{topological} and \ref{same}, it suffices to prove that $H^i\left( \mathbb{P}\left( \mathrm{Sym}^\mathbf{d} S \right) - Z(\bm{\mathfrak{f}}^*); \mathbb{Z} \right) = 0$ for each $i \in \{2 \cdot \dim G - \delta\underscore , \ldots , 2 \cdot \dim G\}$.

Once again as in \cite{Tu:1989aa}, we note that $\mathscr{O}(1)$ is ample on $\mathbb{P}\left( \mathrm{Sym}^\mathbf{d} V \right)$, so that $\mathbb{P}\left( \text{Sym}^\mathbf{d} V \right) - Z(\mathbf{f}^*)$ is affine. Tu moreover proves the following cohomology comparison lemma, which we reproduce almost verbatim. % (Tu writes ``closed subvarieties'' where we write ``closed algebraic sets''; Tu's result \cite[Lem.~3.6]{Tu:1989aa} doesn't rely on the irreducibility of the strata fed to it, and remains true with no irreducibility assumption.)
We write $\mathbb{N} \coloneqq \{0, 1, \ldots ,\}$.

\begin{lemma}[{Tu \cite[Lem.~3.6]{Tu:1989aa}}] \label{comparison}
Let $h : M \to Y$ be a surjective proper morphism from any variety $M$ to an affine variety $Y$. Suppose there is a strictly increasing function $d : \mathbb{N} \to \mathbb{N}$ and a sequence of closed algebraic subsets
\begin{equation*} \cdots \subset Y_{k + 1} \subset Y_k \subset \cdots \subset Y_0 = Y\end{equation*}
such that for each $x \in Y_k - Y_{k - 1}$,
\begin{equation*}d(k) = \dim h^{-1}\left( x \right).\end{equation*}
Define
\begin{equation*}R \coloneqq \max_{k \geq 0} \left( \dim Y_k + 2 \cdot d(k) \right).\end{equation*}
Then for each $i > R$, $H^i(M; \mathbb{Z}) = 0$.
\end{lemma}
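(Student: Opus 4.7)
The natural approach is via the Leray spectral sequence for $h$, $E_2^{p,q} = H^p(Y; R^q h_* \mathbb{Z}) \Rightarrow H^{p+q}(M; \mathbb{Z})$, and the plan is to show that every nonzero $E_2^{p,q}$ satisfies $p + q \leq R$. The first step is to analyze the higher direct images. Since $h$ is proper, proper base change identifies the stalk of $R^q h_* \mathbb{Z}$ at $x \in Y$ with $H^q(h^{-1}(x); \mathbb{Z})$. On the locally closed stratum $Y_k \setminus Y_{k+1}$, the fiber has dimension $d(k)$, so this stalk vanishes for $q > 2 d(k)$. Because $d$ is strictly increasing, the smallest index $k(q)$ with $2 d(k(q)) \geq q$ satisfies $2 d(k) < q$ for all $k < k(q)$, so $R^q h_* \mathbb{Z}$ vanishes on every stratum with $k < k(q)$; since these strata partition $Y \setminus Y_{k(q)}$, the sheaf is constructible and supported on the closed subset $Y_{k(q)} \subset Y$.

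Next I would apply Artin's vanishing theorem. A closed subvariety of an affine variety is affine, so $Y_{k(q)}$ is affine, and Artin's theorem for constructible $\mathbb{Z}$-sheaves on affine varieties gives $H^p(Y; R^q h_* \mathbb{Z}) = H^p\bigl(Y_{k(q)}; (R^q h_* \mathbb{Z})|_{Y_{k(q)}}\bigr) = 0$ whenever $p > \dim Y_{k(q)}$. Combined with the constraint $q \leq 2 d(k(q))$ needed for nonvanishing, any nonzero $E_2^{p,q}$ satisfies $p + q \leq \dim Y_{k(q)} + 2 d(k(q)) \leq R$, and convergence of the Leray spectral sequence then yields $H^i(M; \mathbb{Z}) = 0$ for all $i > R$.

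The two essential inputs are proper base change (to identify the stalks of $R^q h_* \mathbb{Z}$) and Artin vanishing for constructible sheaves on affine varieties with integer coefficients. The subtlest point, and the one that genuinely requires the strict monotonicity hypothesis on $d$, is the support analysis of the previous paragraph: strict monotonicity forces the locus where the fiber dimension is at least $\lceil q/2 \rceil$ to coincide with one of the \emph{given} closed subsets $Y_{k(q)}$ in the filtration, rather than some less tractable locally closed union, so that Artin vanishing can be applied on a single affine piece whose dimension enters directly into the definition of $R$.
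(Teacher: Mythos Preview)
The paper does not give its own proof of this lemma; it simply quotes the statement from Tu \cite[Lem.~3.6]{Tu:1989aa} and cites it. Your argument is correct and is precisely the standard proof (and Tu's): the Leray spectral sequence for $h$, proper base change to pin down the support of $R^q h_* \mathbb{Z}$ on the closed affine subset $Y_{k(q)}$, and Artin--Grothendieck vanishing for constructible sheaves on affine varieties to bound $p$. There is nothing further to compare.
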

\begin{remark}
Though Tu \cite[Lem.~3.6]{Tu:1989aa} writes ``closed subvarieties'' where we write ``closed algebraic subsets'', his proof goes through unchanged even on reducible (though closed) strata. In any case, when we apply Lemma \ref{comparison}, our strata \textit{will} in fact be irreducible, so the point is actually immaterial (see Remark \ref{hypothesis}).
\end{remark}

We note that the analogue of \cite[Prop.~3.5]{Tu:1989aa} holds in our setting; that is, the map $h : \mathbb{P}\left( \text{Sym}^\mathbf{d} S \right) \to  \mathbb{P}\left( \text{Sym}^\mathbf{d} V \right)$ sends $\mathbb{P}\left( \text{Sym}^\mathbf{d} S \right) - Z(\bm{\mathfrak{f}}^*)$ into $\mathbb{P}\left( \mathrm{Sym}^\mathbf{d} V \right) - Z(\mathbf{f}^*)$. Our goal is to apply Lemma \ref{comparison} to our map $h : \mathbb{P}\left( \text{Sym}^\mathbf{d} S \right) - Z(\bm{\mathfrak{f}}^*) \to \mathbb{P}\left( \mathrm{Sym}^\mathbf{d} V \right) - Z(\mathbf{f}^*)$ (viewed as a surjective map onto its image). We recall \cite{Tu:1989aa} that $h$ sends the pair $(\Lambda, [\bm{\varphi}])$ to $([\bm{\varphi}])$, where $\Lambda \subset V$ is $r + 1$-dimensional and $\bm{\varphi} \in \text{Sym}^\mathbf{d} \Lambda$; i.e., $h$ %  : \mathbb{P}\left( \text{Sym}^\mathbf{d} S \right) \to  \mathbb{P}\left( \text{Sym}^\mathbf{d} V \right)$
is the ``forgetful map'' induced by the inclusion $\Lambda \subset V$.

\section{Main Result}

We now initiate a detailed study of the map $h : \mathbb{P}\left( \text{Sym}^\mathbf{d} S \right) \to \mathbb{P}\left( \mathrm{Sym}^\mathbf{d} V \right)$. Our first technical goal is to develop a criterion that predicts the dimension of $h^{-1}(\{[\bm{\varphi}]\})$, for $\bm{\varphi}$ an arbitrary nonzero element of $\mathrm{Sym}^\mathbf{d} V$.

For each nonzero $\bm{\varphi} \in \mathrm{Sym}^\mathbf{d} V$, the fiber $h^{-1}(\{[\bm{\varphi}]\})$ is the set of pairs $(\Lambda, [\bm{\varphi}])$ for which $\bm{\varphi} \in \text{Sym}^\mathbf{d} \Lambda$. To study $h^{-1}(\{[\bm{\bm{\varphi}}]\})$, it's thus enough to figure out which subspaces $\Lambda \subset V$ satisfy $\bm{\varphi} \in \text{Sym}^\mathbf{d} \Lambda$. Our goal is to reduce the question $\text{Sym}^\mathbf{d} \Lambda \stackrel{?}\ni \bm{\varphi}$ to one of simple linear-algebraic containment, of the flavor $\Lambda \stackrel{?}\supset M(\bm{\varphi})$. To this end, we prove the following lemma. % This lemma's content seems to be more-or-less classical. % , and is probably contained in Iarrobino and Kanev \cite{Iarrobino:1999aa}.
% We record a full proof here just for convenience.

\begin{lemma} \label{universal}
For each $\bm{\varphi} \in \mathrm{Sym}^\mathbf{d} V$, there exists a unique subspace $M(\bm{\varphi}) \subset V$ fulfilling the universal property whereby for each subspace $W \subset V$, $\bm{\varphi} \in \mathrm{Sym}^\mathbf{d} W$ if and only if $M(\bm{\varphi}) \subset W$. % not assuming nonzero
\end{lemma}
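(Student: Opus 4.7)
My plan is to define $M(\bm{\varphi}) \coloneqq A(\bm{\varphi})^\perp \subset V$ outright, using the apolar subspace $A(\bm{\varphi}) \subset V^*$ introduced in the Introduction (where, for $\bm{\varphi} = (\varphi_1, \ldots, \varphi_s)$, the space $A(\bm{\varphi})$ is the intersection of the componentwise apolar subspaces $A(\varphi_i)$), and then to verify the universal property via the standard annihilator correspondence between subspaces of $V$ and of $V^*$. Uniqueness is immediate and formal: if $M_1$ and $M_2$ both enjoy the universal property, then instantiating the biconditional at $W = M_1$ and at $W = M_2$ forces $M_2 \subset M_1$ and $M_1 \subset M_2$ respectively.

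The heart of the argument is the equivalence
\[ \bm{\varphi} \in \mathrm{Sym}^\mathbf{d} W \iff W^\perp \subset A(\bm{\varphi}), \]
which by double duality rewrites as $A(\bm{\varphi})^\perp = M(\bm{\varphi}) \subset W$, exactly the required universal property. The forward direction is direct: picking a basis $X_0, \ldots, X_r$ of $W$ and completing it to a basis $X_0, \ldots, X_n$ of $V$, we see that $W^\perp$ is spanned by the dual functionals $X_{r+1}^*, \ldots, X_n^*$, and these act on each component $\varphi_i$ of $\bm{\varphi}$ as the partial derivatives $\partial / \partial X_j$ for $j > r$, all of which vanish when $\varphi_i$ is a polynomial in $X_0, \ldots, X_r$ alone.

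The converse is the technical pinch point. Assuming $W^\perp \subset A(\bm{\varphi})$, one again extends a basis of $W$ to one of $V$ and concludes that $\partial \varphi_i / \partial X_j = 0$ for each $j > r$ and each component $\varphi_i$. Over $\mathbb{C}$, this forces $\varphi_i$ to involve none of $X_{r+1}, \ldots, X_n$, i.e., $\varphi_i \in \mathrm{Sym}^{d_i} W$ for each $i$. The characteristic-zero hypothesis enters essentially here: in positive characteristic, differentiating a monomial could annihilate its coefficient, and the conclusion could fail.

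I expect no substantial obstacle beyond (i) setting up the multidegree notation carefully, so that the apolar action of a single linear form $\ell \in V^*$ on a tuple $\bm{\varphi}$ is unambiguously the tuple of componentwise contractions, and (ii) packaging the double-duality step so that the universal property emerges cleanly from the biconditional. Everything else is formal manipulation in finite-dimensional linear algebra.
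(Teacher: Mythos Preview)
Your proposal is correct and follows essentially the same approach as the paper: define $M(\bm{\varphi}) \coloneqq A(\bm{\varphi})^\perp$ via the contraction action, then verify the universal property by choosing coordinates adapted to the subspace at hand and reading off the vanishing of the relevant partial derivatives (using characteristic zero for the converse). The only cosmetic differences are that the paper first treats the case $s = 1$ and then reduces general $s$ to it via $A(\bm{\varphi}) = \bigcap_i A(\varphi_i)$, whereas you handle all components at once; and the paper proves the ``if'' direction by showing $\bm{\varphi} \in \mathrm{Sym}^\mathbf{d} M(\bm{\varphi})$ specifically, rather than working with an arbitrary $W$ satisfying $W^\perp \subset A(\bm{\varphi})$ as you do---your formulation via the single biconditional $\bm{\varphi} \in \mathrm{Sym}^\mathbf{d} W \iff W^\perp \subset A(\bm{\varphi})$ is arguably a touch cleaner.
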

\begin{proof}
If $M(\bm{\varphi}) \subset V$ exists, then its uniqueness is immediate. We turn to $M(\bm{\varphi})$'s existence.

There is a well-defined \textit{contraction action} between elements $\ell \in V^*$ and elements $\bm{\varphi} \in \mathrm{Sym}^\mathbf{d} V$ (this action appears in e.g.\ Iarrobino and Kanev \cite[(1.1.2)]{Iarrobino:1999aa} and Eisenbud \cite[Prop.~A2.8]{Eisenbud:1995aa}). In concrete terms, it's enough to pick a basis $x_0, \ldots , x_n$ of $V = V^{**}$, to identify $\bm{\varphi} = (\varphi_1, \ldots ,\varphi_s) \in \mathrm{Sym}^\mathbf{d} V \cong \left( \mathrm{Sym}^\mathbf{d} V^* \right)^*$ with a multidegree-$\mathbf{d}$ tuple of polynomials in the indeterminates $X_0, \ldots , X_n$, to write $\ell = \ell_0 \cdot X_0 + \cdots + \ell_n \cdot X_n$ in the dual basis, and then finally to write
\begin{equation*}
\renewcommand{\arraystretch}{2.3}
\setlength{\extrarowheight}{-2pt}
\setlength{\arraycolsep}{7pt}
\ell \circ \bm{\varphi} \coloneqq \begin{bmatrix}
\begin{array}{c c c}
\ell_0 & \cdots & \ell_n
\end{array}
\end{bmatrix} \cdot
\begin{bmatrix}
\begin{array}{c c c}
\frac{\partial \varphi_1}{\partial X_0} & \cdots & \frac{\partial \varphi_s}{\partial X_0} \\
\vdots & \ddots & \vdots \\
\frac{\partial \varphi_1}{\partial X_n} & \cdots & \frac{\partial \varphi_s}{\partial X_n}
\end{array}
\end{bmatrix}.
\end{equation*}
The resulting object $\ell \circ \bm{\varphi}$ is an element of $\text{Sym}^{\mathbf{d} - 1} V$.

We define
\begin{equation*}A(\bm{\varphi}) \coloneqq \set*{\ell \in V^* ; \ell \circ \bm{\varphi} = 0}\end{equation*}
and
\begin{equation*}M(\bm{\varphi}) \coloneqq A(\bm{\varphi})^\perp.\end{equation*}
Our goal is to argue that $M(\bm{\varphi}) \subset V$ fulfills the universal property in the statement of the lemma.

We first handle the case $s = 1$. We let $\mathbf{d} = (d)$ say, and $\bm{\varphi} = (\varphi)$. The \textit{if} direction of the lemma's universal property is equivalent to the claim whereby $\varphi \in \text{Sym}^d M(\varphi)$. Up to changing our basis, we can assume that $A(\varphi) = \left< X_0, \ldots , X_{\dim A(\varphi) - 1} \right>$, so that moreover $M(\varphi) = \left< x_{\dim A(\varphi)}, \ldots , x_n \right>$. For each $i \in \{0, \ldots , \dim A(\varphi) - 1\}$, since $X_i \in A(\varphi)$, we have $\frac{\partial \varphi}{\partial X_i} = 0$; this fact implies that $\varphi$ is independent of the indeterminate $x_i$. We see that $\varphi$ depends only on the indeterminates $x_{\dim A(\varphi)}, \ldots , x_n$; in other words, $\varphi \in \text{Sym}^d M(\varphi)$. Conversely, we fix a subspace $W \subset V$ for which $\varphi \in \text{Sym}^d W$. We again pick a basis $x_0, \ldots , x_n$ of $V$, now in such a way that $W = \left< x_0, \ldots , x_{\dim W - 1} \right>$. Our hypothesis $\varphi \in \text{Sym}^d W$ implies that $\varphi$ is expressible as a degree-$d$ homogeneous polynomial in the indeterminates $x_0, \ldots , x_{\dim W - 1}$. For each $\ell \in W^\perp$, $\ell \circ \varphi$ is a linear combination of the partial derivatives $\frac{\partial \varphi}{\partial X_{\dim W}}, \ldots , \frac{\partial \varphi}{\partial X_n}$, each of which individually vanishes, by hypothesis on $\varphi$. We have just shown that $W^\perp \subset A(\varphi)$, or equivalently $M(\varphi) \subset W$. This completes the proof of the case $s = 1$.

In the case of general $s$, we note that $A(\bm{\varphi}) = \bigcap_{i = 1}^s A(\varphi_i)$, so that $A(\bm{\varphi}) \subset A(\varphi_i)$ holds for each $i \in \{1, \ldots , s\}$, and in particular $M(\varphi_i) = A(\varphi_i)^\perp \subset A(\bm{\varphi})^\perp = M(\bm{\varphi})$ also does. Using the \textit{if} direction of the universal property individually for each $i \in \{1, \ldots , s\}$, we find that $\varphi_i \in \text{Sym}^{d_i} M(\varphi_i)$ for each $i \in \{1, \ldots , s\}$. By the containment just above, we conclude in turn that $\varphi_i \in \text{Sym}^{d_i} M(\bm{\varphi})$ for each $i \in \{1, \ldots , s\}$, or in other words that $\bm{\varphi} \in \text{Sym}^\mathbf{d} M(\bm{\varphi})$. This proves the \textit{if} direction. Conversely, for each $W \subset V$ for which $\varphi_i \in \text{Sym}^{d_i} W$ holds for each $i \in \{1, \ldots , s\}$, applying the \textit{only if} direction of the universal property individually, we find that $W \supset M(\varphi_i) = A(\varphi_i)^\perp$ for each $i \in \{1, \ldots , s\}$; in particular, $W \supset A(\varphi_1)^\perp + \cdots + A(\varphi_s)^\perp = \left( A(\varphi_1) \cap \cdots \cap A(\varphi_s) \right)^\perp = A(\bm{\varphi})^\perp = M(\bm{\varphi})$, which proves the \textit{only if} direction.
\end{proof}

\begin{definition}
For each $\bm{\varphi} \in \mathrm{Sym}^\mathbf{d} V$, we write $m(\bm{\varphi}) \coloneqq \dim M(\bm{\varphi})$.
\end{definition}

\begin{remark}
We could have alternatively defined the subspace $M(\bm{\varphi}) \subset V$ of Lemma \ref{universal} as $\overline{M}(\bm{\varphi}) \coloneqq \bigcap_{\bm{\varphi} \in \text{Sym}^\mathbf{d} W} W$, i.e.\ as the intersection in $V$ of all those subspaces $W$ for which $\bm{\varphi} \in \text{Sym}^\mathbf{d} W$. This subspace $\overline{M}(\bm{\varphi})$ clearly satisfies the \textit{only if} direction of the universal property, but is $\bm{\varphi} \in \text{Sym}^\mathbf{d} \overline{M}(\bm{\varphi})$? This inclusion indeed holds, but proving that it does is not free, and seems to demand something like the proof of Lemma \ref{universal}. With the conclusion of Lemma \ref{universal} in hand, of course, the conclusion $\overline{M}(\bm{\varphi}) = M(\bm{\varphi})$ becomes immediate. % but once we have Lemma 3.1, can we show that K' = K does in fact hold?
\end{remark}

\begin{example} \label{tu_specialize}
Tu's main result \cite{Tu:1989aa} leverages the fact that elements $\varphi \in \text{Sym}^2 V$ correspond to maps $\varphi : V^* \to V$; moreover, under this correspondence, a subspace $W \subset V$ satisfies $\varphi \in \text{Sym}^2 W$ if and only if $\im (\varphi) \subset W$. We claim that in the case $\mathbf{d} = (2)$, our $M(\varphi) \subset V$ exactly specializes to $\im (\varphi) \subset V$. Indeed, each $\varphi \in \text{Sym}^2 V$ identifies to the symmetric bilinear form $B_\varphi : V^* \times V^* \to \mathbb{C}$ defined by $B_\varphi(\ell, \ell') \coloneqq (\ell \otimes \ell')(\varphi)$, and hence to the quadratic form $f(\ell') = \frac{1}{2} \cdot B_\varphi(\ell', \ell')$ on $V^*$. For $\ell \in V^*$ arbitrary, $\ell \circ \varphi$ is the linear form
\begin{equation*}\ell' \mapsto (\ell(\partial) f)(\ell') = \left. \frac{\partial}{\partial t} \right|_{t = 0} f(\ell' + t \cdot \ell) = \left. \frac{\partial}{\partial t} \right|_{t = 0} \frac{1}{2} \cdot B_\varphi \left( \ell' + t \cdot \ell, \ell' + t \cdot \ell \right) = B_\varphi(\ell, \ell')\end{equation*}
on $V^*$; to get the last step, we expand $B_\varphi \left( \ell' + t \cdot \ell, \ell' + t \cdot \ell \right)$ by bilinearity into a quadratic polynomial in $t$ before differentiating it. We see that $\ell \circ \varphi = 0$ if and only if $\ell$ is in the nullspace of the symmetric bilinear form $B_\varphi$. This nullspace is exactly the kernel of the induced map $\varphi : V^* \to V$. Taking orthogonal complements, we conclude that $M(\varphi) = A(\varphi)^\perp = \im \varphi$, as required.
\end{example}

We are now ready to define our stratification $\mathbb{P}\left( \mathrm{Sym}^\mathbf{d} V \right) - Z(\mathbf{f}^*) \supset Y_0 \supset \cdots Y_k \supset Y_{k + 1} \supset \cdots$. In this endeavor, we directly generalize Tu \cite{Tu:1989aa}; that is, we simply replace $\rank(\varphi)$ with $m(\bm{\varphi})$.
\begin{definition}
For each $k \geq 0$, set $Y_k \coloneqq \set*{ [\bm{\varphi}] \in \mathbb{P}\left( \mathrm{Sym}^\mathbf{d} V \right) ; m(\bm{\varphi}) \leq r + 1 - k} \cap \left( \mathbb{P}\left( \mathrm{Sym}^\mathbf{d} V \right) - Z(\mathbf{f}^*) \right)$.
\end{definition}

The following proposition also adapts an observation of Tu \cite{Tu:1989aa} to our setting.
\begin{proposition} \label{fiber_dimension}
For each $k \in \{0, \ldots , r \}$ and each element $[\bm{\varphi}] \in Y_k - Y_{k + 1}$, $\dim h^{-1}\left( \{ [ \bm{\varphi} ] \} \right) = k \cdot (n - r)$. % ($\dim h^{-1}\left( \{ [ \varphi ] \} \right)$ is empty if $k > r + 1$).
\end{proposition}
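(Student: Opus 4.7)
My plan is to combine Lemma \ref{universal} with a standard Grassmannian-dimension count; the argument mirrors Tu's in the $\mathbf{d} = (2)$ case, with our $M(\bm{\varphi}) \subset V$ taking the place of $\im \varphi$ there (cf.\ Example \ref{tu_specialize}).

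First, I would note that, set-theoretically, the fiber $h^{-1}(\{[\bm{\varphi}]\})$ consists of those pairs $(\Lambda, [\bm{\varphi}]) \in \mathbb{P}(\text{Sym}^\mathbf{d} S)$ for which $\bm{\varphi} \in \text{Sym}^\mathbf{d} \Lambda$. By Lemma \ref{universal}, this is equivalent to $M(\bm{\varphi}) \subset \Lambda$, so the fiber identifies with the Schubert-type locus $\{\Lambda \in G(r+1, n+1) : \Lambda \supset M(\bm{\varphi})\}$. Next, I would unpack $[\bm{\varphi}] \in Y_k - Y_{k+1}$: the simultaneous conditions $m(\bm{\varphi}) \leq r+1-k$ and $m(\bm{\varphi}) > r-k$ force $m(\bm{\varphi}) = r+1-k$, so that $M(\bm{\varphi})$ is a fixed $(r+1-k)$-dimensional subspace of $V$. (Incidentally, the range $k \in \{0, \ldots, r\}$ is pinned down here: the bound $k \geq 0$ corresponds to $m(\bm{\varphi}) \leq r+1$, necessary for an $(r+1)$-plane to contain $M(\bm{\varphi})$, while $k \leq r$ corresponds to $m(\bm{\varphi}) \geq 1$, which is automatic for $\bm{\varphi} \neq 0$ by the \emph{if} direction of Lemma \ref{universal} applied to $W = M(\bm{\varphi})$.)

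Finally, I would quotient by $M(\bm{\varphi})$ to identify
\[
\{\Lambda \in G(r+1, n+1) : \Lambda \supset M(\bm{\varphi})\} \;\cong\; G\bigl(k,\, V/M(\bm{\varphi})\bigr) \;=\; G(k,\, n-r+k),
\]
a Grassmannian of dimension $k \cdot \bigl((n-r+k) - k\bigr) = k(n-r)$, as required. I don't anticipate any substantive obstacle: Lemma \ref{universal} has already converted the nonlinear condition $\bm{\varphi} \in \text{Sym}^\mathbf{d} \Lambda$ into the linear containment $\Lambda \supset M(\bm{\varphi})$, after which the dimension count is a standard partial-flag-variety computation. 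The only point requiring care is the bookkeeping of the indexing convention for $Y_k$, which links $m(\bm{\varphi})$ and $k$ via $m(\bm{\varphi}) = r+1-k$ on the locally closed stratum $Y_k - Y_{k+1}$.
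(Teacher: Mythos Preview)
Your proposal is correct and follows essentially the same approach as the paper: both use Lemma \ref{universal} to identify the fiber with the set of $(r+1)$-planes $\Lambda$ containing $M(\bm{\varphi})$, read off $m(\bm{\varphi}) = r+1-k$ from the stratum condition, and then pass to the quotient $V/M(\bm{\varphi})$ to recognize the Grassmannian $G(k, n-r+k)$ of dimension $k(n-r)$. Your parenthetical about the range $k \in \{0, \ldots, r\}$ is an extra observation the paper places just after (rather than inside) the proof.
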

\begin{proof}
As Lemma \ref{universal} shows, the fiber $h^{-1}\left( \{ [ \bm{\varphi} ] \} \right) \subset \mathbb{P}\left( \text{Sym}^\mathbf{d} S \right)$ corresponds in a one-to-one way with the set of $r + 1$-dimensional subspaces $\Lambda \subset V$ for which $M(\bm{\varphi}) \subset \Lambda \subset V$. Our hypothesis $[\bm{\varphi}] \in Y_k - Y_{k + 1}$ entails that $\dim M(\bm{\varphi}) = m(\bm{\varphi}) = r + 1 - k$. The set of such subspaces $\Lambda$ thus further corresponds in a one-to-one way with the set of $k$-dimensional subspaces of $V \mathbin{/} M(\bm{\varphi})$. This latter set is the Grassmannian $G(k, n - r + k)$, a variety whose dimension is $k \cdot (n - r)$.
\end{proof}

We note that if $k > r$, then $Y_k$ is necessarily empty, since $\dim M(\bm{\varphi}) = m(\bm{\varphi})$ is at least 1 whenever $\bm{\varphi} \neq 0$ (indeed, we must have $\bm{\varphi} \in \text{Sym}^\mathbf{d} M(\bm{\varphi})$).  On the other hand, for each $k < 0$, while the set $Y_k \subset \mathbb{P}\left( \mathrm{Sym}^\mathbf{d} V \right)$ makes sense, $h^{-1}\left( \{ [ \bm{\varphi} ] \} \right) = \varnothing$ holds for each $[\bm{\varphi}] \in Y_k - Y_{k + 1}$ and each such $k$ (this follows from our proof of Proposition \ref{fiber_dimension}). In particular, $h : \mathbb{P}\left( \text{Sym}^\mathbf{d} S \right) - Z(\bm{\mathfrak{f}}^*) \to \mathbb{P}\left( \mathrm{Sym}^\mathbf{d} V \right) - Z(\mathbf{f}^*)$ is surjective onto $Y_0$.

\begin{proposition} \label{hard}
Each $Y_k$ is closed and affine, of dimension at most $(r + 1 - k) \cdot (n - r + k) + {\mathbf{d} + r - k \choose r - k} - 1$.
\end{proposition}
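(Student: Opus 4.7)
My plan has two parts. First I would establish closedness and affineness simultaneously; second I would bound the dimension via an incidence construction.

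For the first part, I would observe that $m(\bm{\varphi})$ is nothing other than the rank of a linearly-varying map. Indeed, the contraction pairing furnishes, for each $\bm{\varphi} \in \text{Sym}^\mathbf{d} V$, a linear map $C_{\bm{\varphi}} : V^* \to \text{Sym}^{\mathbf{d}-1} V$, $\ell \mapsto \ell \circ \bm{\varphi}$, whose kernel is precisely $A(\bm{\varphi})$. Taking orthogonal complements, $m(\bm{\varphi}) = \dim M(\bm{\varphi}) = \rank(C_{\bm{\varphi}})$. Since $\bm{\varphi} \mapsto C_{\bm{\varphi}}$ is linear, the condition $\rank(C_{\bm{\varphi}}) \leq r + 1 - k$ is cut out by the vanishing of appropriately sized minors, and so defines a Zariski-closed subset of $\mathbb{P}(\text{Sym}^\mathbf{d} V)$. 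Intersecting with the complement of $Z(\mathbf{f}^*)$---which is affine, as already noted, because $\mathbf{f}^*$ is a section of the ample $\mathscr{O}(1)$---yields $Y_k$, which is thereby a closed subvariety of an affine variety, hence itself affine.

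For the dimension bound I would introduce the incidence variety
\begin{equation*}
\mathcal{I}_k \coloneqq \set*{([\bm{\varphi}], W) \in \mathbb{P}(\text{Sym}^\mathbf{d} V) \times G(r + 1 - k, n + 1) ; \bm{\varphi} \in \text{Sym}^\mathbf{d} W}.
\end{equation*}
The projection to $G(r + 1 - k, n + 1)$ presents $\mathcal{I}_k$ as the projective bundle $\mathbb{P}(\text{Sym}^\mathbf{d} S_k)$, where $S_k$ denotes the tautological rank-$(r + 1 - k)$ sub-bundle; each fiber is a projective space of dimension ${\mathbf{d} + r - k \choose r - k} - 1$. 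Hence $\dim \mathcal{I}_k = (r + 1 - k)(n - r + k) + {\mathbf{d} + r - k \choose r - k} - 1$. By Lemma \ref{universal}, any $\bm{\varphi}$ with $m(\bm{\varphi}) \leq r + 1 - k$ admits at least one $(r + 1 - k)$-dimensional $W \supset M(\bm{\varphi})$, for which automatically $\bm{\varphi} \in \text{Sym}^\mathbf{d} W$; hence the other projection $\mathcal{I}_k \to \mathbb{P}(\text{Sym}^\mathbf{d} V)$ has image containing $Y_k$. That image is closed (since $\mathcal{I}_k$ is proper over $\mathbb{P}(\text{Sym}^\mathbf{d} V)$) and has dimension at most $\dim \mathcal{I}_k$, giving the claimed bound.

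I expect the subtlest step to be the identification $m(\bm{\varphi}) = \rank(C_{\bm{\varphi}})$, which is what converts what looks like an opaque geometric containment condition into a standard determinantal one. Without it, neither the closedness of $Y_k$ nor the bookkeeping for the dimension bound would go through cleanly. Everything after that identification---the incidence construction, the projective-bundle description, and the image inclusion---is essentially formal dimension counting.
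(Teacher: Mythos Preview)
Your proof is correct and, for the dimension bound, essentially identical to the paper's: both introduce the incidence correspondence
\[
I_k = \set*{(W,[\bm{\varphi}]) \in G(r+1-k,n+1)\times\mathbb{P}(\text{Sym}^\mathbf{d} V) ; \bm{\varphi}\in\text{Sym}^\mathbf{d} W},
\]
realize it as a projective bundle over the Grassmannian, and bound the dimension of its image.

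Where you diverge is in the proof of closedness. The paper extracts closedness of $Y'_k = \set*{[\bm{\varphi}] ; m(\bm{\varphi})\leq r+1-k}$ from this same incidence correspondence, by noting that $\pi_k$ is proper and that its image is exactly $Y'_k$; closedness and the dimension bound thus both fall out of the single construction $I_k$. You instead observe directly that $m(\bm{\varphi}) = \rank(C_{\bm{\varphi}})$ for the linear contraction map $C_{\bm{\varphi}}:V^*\to\text{Sym}^{\mathbf{d}-1}V$, so that $Y'_k$ is a determinantal locus cut out by minors. This is a perfectly valid alternative, and arguably more elementary: it makes the closedness visible from explicit equations rather than relying on properness. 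The paper's route is slightly more economical in that it reuses $I_k$ for both conclusions; yours trades that unification for a more hands-on description. Your closing remark overstates things a bit: the identification $m(\bm{\varphi})=\rank(C_{\bm{\varphi}})$ is not needed for the dimension bound (Lemma~\ref{universal} alone suffices to show the image of $I_k$ contains $Y'_k$), and the paper shows closedness can be had without it as well.
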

\begin{proof}
For the purposes of this proof, we introduce a notational device. For each $k \in \{0, \ldots , r\}$, we write $Y'_k \coloneqq \set*{ [\bm{\varphi}] \in \mathbb{P}\left( \mathrm{Sym}^\mathbf{d} V \right) ; m(\bm{\varphi}) \leq r + 1 - k}$; in particular, we have $Y_k = Y'_k \cap \left( \mathbb{P}\left( \mathrm{Sym}^\mathbf{d} V \right) - Z(\mathbf{f}^*) \right)$. % We first prove the result for $Y'_k$; later, below, we argue that this result suffices to imply the proposition.

We have the incidence correspondence:
\begin{figure}[H]
\centering
\begin{tikzpicture} % [>=latex]
  \node (top) at (2, 2) {$I_k = \set*{ \left( W, [\bm{\varphi}] \right) \in G \Big( r + 1 - k, n + 1\Big) \times \mathbb{P}\Big( \text{Sym}^d V \Big) ; [\bm{\varphi}] \in \mathbb{P}\Big( \text{Sym}^\mathbf{d} W \Big)}$};
  \node (left) at (0, 0) {$G\Big(r + 1 - k, n + 1\Big)$};
  \draw[->] ($(top.south) + (-0.5, 0)$) -- (left.north);
  \node (right) at (5, 0) {$\mathbb{P}\Big( \text{Sym}^\mathbf{d} V \Big) \supset Y'_k$};
  \draw[->] ($(top.south) + (0.5, 0)$) -- node[right=\arrowlabelsep] {$\pi_k$} ($(right.north) + (-0.5, 0)$);
\end{tikzpicture}.
% \caption{The map $h$ maps $\mathbb{P}\left( \text{Sym}^\mathbf{d} S \right) - Z(\bm{\mathfrak{f}}^*)$ to an affine variety.} \label{figure}
\end{figure}
This correspondence is in a sense dual to one which appears in \cite[\S~2]{Debarre:1998aa}. We first claim that the image $\pi_k(I_k) \subset \mathbb{P}\left( \text{Sym}^\mathbf{d} V \right)$ is exactly $Y'_k$. Indeed, $[\bm{\varphi}] \in Y'_k$ if and only if $m(\bm{\varphi}) = \dim M(\bm{\varphi})$ is \textit{at most} $r + 1 - k$; this latter condition holds if and only if there exists a subspace $W \subset V$ of dimension \textit{exactly} $r + 1 - k$ such that $\bm{\varphi} \in \text{Sym}^\mathbf{d} W$ (indeed, we can take as $W$ any $r + 1 - k$-dimensional subspace containing $M(\bm{\varphi})$).

The incidence correspondence above realizes $I_k$ as a projective bundle over $G(r + 1 - k, n + 1)$, whose fiber over $W \subset V$ is the projective space $\mathbb{P}\left( \text{Sym}^\mathbf{d} W \right)$. We conclude that $I_k$'s dimension is $(r + 1 - k) \cdot (n - r + k) + {\mathbf{d} + r - k \choose r - k} - 1$. % , and that $I_k$ is irreducible. Because $\pi_k : I_k \to Y'_k$ is surjective, $Y'_k$ is also irreducible.
Because $\pi_k$ is proper, $Y'_k \subset \mathbb{P}\left( \text{Sym}^\mathbf{d} V \right)$ is closed, of dimension at most that of $I_k$.

Since $\mathbb{P} \left( \mathrm{Sym}^\mathbf{d} V \right) - Z(\mathbf{f}^*)$ is affine and $Y'_k \subset \mathbb{P} \left( \mathrm{Sym}^\mathbf{d} V \right)$ is closed, $Y_k = Y'_k \cap \left( \mathbb{P} \left( \mathrm{Sym}^\mathbf{d} V \right) - Z(\mathbf{f}^*) \right)$ is itself closed and affine, of dimension at most that of $Y'_k$.
% Since $Y'_k$ is stable under the natural $GL(V)$-action on $\mathbb{P} \left( \text{Sym}^\mathbf{d} V \right)$, its cone in $\text{Sym}^\mathbf{d} V$ is a union of $GL(V)$-orbits. Moreover, as the proof carried out just above in fact shows, for each $i \in \{1, \ldots , s\}$, that cone contains at least one nonzero $GL(V)$-orbit in $\text{Sym}^{d_i} V$; its linear span thus contains all of $\text{Sym}^{d_i} V$, by that representation's irreducibility. The linear span of the cone over $Y'_k$ in $\text{Sym}^\mathbf{d} V$ is thus $\text{Sym}^\mathbf{d} V$ itself. We conclude that the open hyperplane complement $Y_k = Y'_k \cap \left( \mathbb{P} \left( \text{Sym}^\mathbf{d} V \right) - Z(\mathbf{f}^*) \right)$ of $Y'_k$ is nonempty, and so is irreducible just as $Y'_k$ is, and of the same dimension as $Y'_k$ (here, we use our assumption $\mathbf{f} \neq 0$).
\end{proof}
\begin{remark} \label{hypothesis}
Though Proposition \ref{hard} isolates just what we need to make Proposition \ref{final} go through, the full situation is rather nicer. Indeed, it's possible to show that for each $k \in \{0, \ldots , r\}$, the sets $I_k$, $Y'_k$, and $Y_k$ are all nonempty, irreducible, and of dimension \textit{exactly} equal to that of $I_k$. That $I_k$ and its image $Y'_k$ are irreducible is essentially immediate. To show that $\dim I_k = \dim Y'_k$, it suffices to exhibit an explicit element $[\bm{\varphi}] \in Y'_k - Y'_{k + 1}$. (This feat can itself be done using apolarity.) The existence of just such an element implies that the open set $Y'_k - Y'_{k + 1} \subset Y'_k$ is nonempty, and so dense; moreover, the fiber $\pi_k^{-1}\left( \{[\bm{\varphi}]\}\right) = \left\{ (M(\bm{\varphi}), [\bm{\varphi}]) \right\}$ is 0-dimensional for each such element $[\bm{\varphi}] \in Y'_k - Y'_{k + 1}$. By the theorem on fiber dimensions, the set consisting of those $[\bm{\varphi}] \in Y'_k$ for which $\dim \pi_k^{-1}\left( \{[\bm{\varphi}]\}\right) = \dim I_k - \dim Y'_k$ itself contains an open dense subset of $Y'_k$, which must therefore intersect $Y'_k - Y'_{k + 1}$ nontrivially. These facts imply that $\dim I_k = \dim Y'_k$. Finally, $Y'_k$ isn't contained in any hyperplane, a fact which follows from the representation theory of the $GL(V)$-representation $\text{Sym}^\mathbf{d} V$. This final fact guarantees that $Y'_k$'s open affine hyperplane complement $Y_k$ is necessarily nonempty, again irreducible and of the same dimension as $Y'_k$.

% The proof of Proposition \ref{hard} in fact shows that, for each $k \in \{0, \ldots , r\}$, $Y'_k$ is irreducible, so that $Y_k$ is \textit{either} empty or of dimension exactly that of $Y'_k$. As it happens, one can further show that only the latter among these two possibilities can happen. In fact, one can show that, for each $k \in \{0, \ldots , r\}$, $Y'_k$ isn't contained in any hyperplane. % Indeed, since $Y'_k$ is stable under the natural $GL(V)$-action on $\mathbb{P} \left( \text{Sym}^\mathbf{d} V \right)$, its cone in $\text{Sym}^\mathbf{d} V$ is a union of $GL(V)$-orbits. The proof of Proposition \ref{hard} shows that, for each $i \in \{1, \ldots , s\}$, that cone contains at least one nonzero $GL(V)$-orbit in $\text{Sym}^{d_i} V$; its linear span thus contains all of $\text{Sym}^{d_i} V$, by that representation's irreducibility. We conclude that the linear span of the cone over $Y'_k$ in $\text{Sym}^\mathbf{d} V$ is $\text{Sym}^\mathbf{d} V$ itself.
% That is, for each $k \in \{0, \ldots , r\}$, $Y_k$ is nonempty and also irreducible, of dimension \textit{exactly} that of $Y'_k$. This fact, though nice, is not necessary for our proof to go through.
\end{remark}

We're now ready to prove our main result. As Proposition \ref{fiber_dimension} shows that we must, we set $d(k) \coloneqq k \cdot (n - r)$. In this context, we obtain the following result, which---in light of Lemmas \ref{topological} and \ref{comparison}---suffices to complete our proof of Theorem \ref{main}. % For our purposes below, we assume $\delta\underscore \geq 0$ (Theorem \ref{main}'s statement becomes empty whenever $\delta\underscore < 0$).
\begin{proposition} \label{final}
If $\delta\underscore \geq 0$, $R \coloneqq \max_{k \in \{0, \ldots , r\}} \left( \dim Y_k + 2 \cdot d(k) \right)$ satisfies $R < 2 \cdot \dim G(r + 1, n + 1) - \delta\underscore$.
\end{proposition}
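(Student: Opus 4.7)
The plan is to unwind Proposition~\ref{hard}'s bound on $\dim Y_k$ together with $d(k) = k(n-r)$ into a purely arithmetic inequality. Substituting these into $\dim Y_k + 2 d(k) < 2 \dim G(r+1, n+1) - \delta\underscore$ and using the identity $(r+1-k)(n-r+k) + 2k(n-r) = (r+1)(n-r) + k(n+1-k)$, the conclusion of Proposition~\ref{final} reduces to the integer inequality
\[ k(n+1-k) + \binom{\mathbf{d}+r-k}{r-k} + \delta\underscore \leq (r+1)(n-r) \]
for each $k \in \{0, \ldots, r\}$. Reindexing $m := r - k$ and setting $\tilde{B}(m) := (m+1)(n - 2r + m) - \binom{\mathbf{d}+m}{m}$, this is equivalent to $\tilde{B}(m) \geq \delta\underscore$ for every $m \in \{0, \ldots, r\}$. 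Crucially, $\tilde{B}$'s two endpoint values recover exactly the quantities whose minimum defines $\delta\underscore$: $\tilde{B}(0) = n - 2r - s$ (since $\binom{\mathbf{d}}{0} = s$) and $\tilde{B}(r) = \delta$, so $\delta\underscore = \min(\tilde{B}(0), \tilde{B}(r))$. It suffices to show that $\tilde{B}$'s minimum over $\{0, \ldots, r\}$ is attained at one of its endpoints.

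The key step is to compute $\tilde{B}$'s discrete second differences; a double application of Pascal's rule yields, for each $j \in \{1, \ldots, r-1\}$,
\[ \tilde{B}(j+1) - 2\tilde{B}(j) + \tilde{B}(j-1) = 2 - \sum_{i = 1}^{s} \binom{d_i + j - 1}{j+1}. \]
Each summand on the right equals $0$ for $d_i = 1$, equals $1$ for $d_i = 2$, and is at least $\binom{j+2}{j+1} = j + 2 \geq 3$ for $d_i \geq 3$. Therefore, whenever $\mathbf{d}$ contains either some $d_i \geq 3$ or at least two indices with $d_i \geq 2$, the second difference is $\leq 0$ at every interior $j$, making $\tilde{B}$ concave. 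Since a concave sequence lies above every chord connecting two of its values, we conclude $\tilde{B}(m) \geq \min(\tilde{B}(0), \tilde{B}(r)) = \delta\underscore$ in this case.

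The remaining scenario is $\mathbf{d} \in \{(1^s), (1^{s-1}, 2)\}$. Here the formula for $\binom{\mathbf{d}+m}{m}$ collapses, giving respectively $\tilde{B}(m) = (m+1)(m + N)$ and $\tilde{B}(m) = (m+1)(N + 1 + m) - \binom{m+2}{2}$, where $N := n - 2r - s$. A short calculation of first differences yields $\tilde{B}(m+1) - \tilde{B}(m) = N + 2(m+1)$ and $\tilde{B}(m+1) - \tilde{B}(m) = N + m + 1$ respectively, each strictly positive since the standing hypothesis $\delta\underscore \geq 0$ forces $N \geq 0$. Thus $\tilde{B}$ is strictly increasing, its minimum $\tilde{B}(0) = N$ automatically satisfies $N \leq \delta = \tilde{B}(r)$, and $\min \tilde{B} = N = \delta\underscore$ as required. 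The main obstacle is this exceptional case analysis: the concavity argument fails on precisely these two explicit families, and one must switch to an explicit monotonicity computation to handle them.
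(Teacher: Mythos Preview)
Your proof is correct and follows essentially the same route as the paper's: both reduce to the arithmetic claim that the function $F(k)=(r+1-k)(n-r-k)-\binom{\mathbf{d}+r-k}{r-k}+1$ (your $\tilde B(m)+1$ under $m=r-k$) attains its minimum at an endpoint, and both establish this via the second-difference identity $\Delta^2\binom{\mathbf{d}+m}{m}=\binom{\mathbf{d}+m-1}{m+1}$ together with a residual monotonicity check. The only substantive difference is cosmetic: the paper first invokes the reduction ``it is harmless to assume $d_i>1$'' (which leaves $F$ unchanged while dropping degree-$1$ entries and decrementing $n$) and is then left with the single special case $\mathbf d=(2)$, whereas you forgo that reduction and instead treat the two exceptional families $(1^s)$ and $(1^{s-1},2)$ directly---these are exactly the preimages of the paper's cases $\mathbf d=\varnothing$ and $\mathbf d=(2)$ under that reduction.
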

\begin{proof}
Unwinding what the statement of the proposition means and using Propositions \ref{fiber_dimension} and \ref{hard}, we find that the claim
\begin{equation}\label{thingy}\max_{k \in \{0, \ldots , r\}} \left( (r + 1 - k) \cdot (n - r + k) + {\mathbf{d} + r - k \choose r - k} - 1 + 2 \cdot k \cdot (n - r) \right) < 2 \cdot \dim G(r + 1, n + 1) - \delta\underscore\end{equation}
suffices to prove the proposition. Indeed, Proposition \ref{hard} implies that the left-hand side of \eqref{thingy} above itself upper-bounds $R$ (in fact, this inequality is \textit{a posteriori} an equality, as we note in Remark \ref{hypothesis} above).

We note first that
\begin{equation}\label{sum}\Big( (r + 1 - k) \cdot (n - r + k) + 2 \cdot k \cdot (n - r) \Big) + \Big( (r + 1 - k) \cdot (n - r - k) \Big) = 2 \cdot \dim G(r + 1, n + 1).\end{equation}
Taking $2 \cdot \dim G(r + 1, n + 1)$ minus both sides of \eqref{thingy} and using \eqref{sum}, we obtain the following equivalent reformulation of \eqref{thingy}:
\begin{equation}\label{equivalent}\min_{k \in \{0, \ldots , r\}} \left( (r + 1 - k) \cdot (n - r - k) - {\mathbf{d} + r - k \choose r - k} + 1 \right) > \delta\underscore.\end{equation}
Precisely this claim is asserted in \cite[(2.9)]{Debarre:1998aa}. Indeed, \cite[(2.9)]{Debarre:1998aa} is equivalent to our \eqref{equivalent} under the reparameterization $h = r + 1 - k$. As those authors note, the values of \eqref{equivalent} at $k = 0$ and $k = r$ are $\delta + 1$ and $n - 2r - s + 1$, respectively; it thus suffices to show that, over the range $k \in \{0, \ldots , r\}$,
\begin{equation}\label{f}F(k) \coloneqq (r + 1 - k) \cdot (n - r - k) - {\mathbf{d} + r - k \choose r - k} + 1\end{equation}
attains its minimum value at one of its two endpoints.

It's harmless to assume that $d_i > 1$ for each $i \in \{1, \ldots , s\}$. As Debarre and Manivel argue, \eqref{f} is \textit{decreasing} on $k \in \{0, \ldots , r\}$ if $\mathbf{d} = (2)$ and \textit{concave} if $\mathbf{d} \neq (2)$. For self-containedness, we record a proof of this fact here.

To prove the first among these claims, we note that, in the case $\mathbf{d} = (2)$, for each $k \in \{1, \ldots , r\}$, $F(k - 1) - F(k) = n - k - r$ (this is a trivial algebraic simplification). This quantity itself is at least $n - 2r > n - 2r - s \geq \delta\underscore \geq 0$, a fact which shows that $F(k)$ is (strictly) decreasing on $\{0, \ldots , r\}$. Here (and nowhere else in this proof), we use our hypothesis $\delta\underscore \geq 0$.

% it's enough to explicitly expand \eqref{f} and compute $F(k + 1) - F(k)$ for each $k \in \{0, \ldots , r - 1\}$, exploiting $\delta\underscore \geq 0$ (in fact that hypothesis is necessary only here).

To show that \eqref{f} is concave in case $\mathbf{d} \neq (2)$, it suffices to show that, for each $k \in \{1, \ldots , r - 1\}$, the discrete second difference
\begin{equation*}\Delta^2 F(k) = F(k + 1) - 2 \cdot F(k) + F(k - 1) \leq 0.\end{equation*}
Since $F(k)$'s first summand $(r + 1 - k) \cdot (n - r - k)$ is quadratic and monic in $k$, that summand's discrete second difference is identically 2. On the other hand, writing $G(k) \coloneqq {\mathbf{d} + r - k \choose r - k}$ for \eqref{f}'s subtrahend, for each $k \in \{1, \ldots , r - 1\}$, repeatedly using Pascal's identity ${N \choose j} = {N - 1 \choose j - 1} + {N - 1 \choose j}$, we obtain:
\begin{align*}\Delta^2 G(k) &= {\mathbf{d} + r - k - 1 \choose r - k - 1} - 2 \cdot {\mathbf{d} + r - k \choose r - k} + {\mathbf{d} + r - k + 1 \choose r - k + 1} \tag{by definition.} \\
&= {\mathbf{d} + r - k \choose r - k} - {\mathbf{d} + r - k - 1 \choose r - k} - 2 \cdot {\mathbf{d} + r - k \choose r - k} + {\mathbf{d} + r - k \choose r - k} + {\mathbf{d} + r - k \choose r - k + 1} \tag{Pascal.} \\
&= {\mathbf{d} + r - k \choose r - k + 1} - {\mathbf{d} + r - k - 1 \choose r - k} \tag{cancel terms.} \\
&= {\mathbf{d} + r - k - 1 \choose r - k + 1}. \tag{Pascal's identity one last time.}
\end{align*}
It suffices to demonstrate that this last term above satisfies ${\mathbf{d} + r - k - 1 \choose r - k + 1} \geq 2$. For each $k \in \{1, \ldots , r - 1\}$ and each $i \in \{1, \ldots , s\}$, ${d_i + r - k - 1 \choose r - k + 1} = {d_i + r - k - 1 \choose d_i - 2}$ is positive, and equals 1 precisely when $d_i = 2$. Our hypothesis $\mathbf{d} \neq (2)$ guarantees that \textit{either} the sum ${\mathbf{d} + r - k - 1 \choose r - k + 1} = \sum_{i = 1}^s {d_i + r - k - 1 \choose r - k + 1}$ has more than one term \textit{or} its sole term satisfies $d_i > 2$; in either case, we obtain  $\Delta^2 G(k) = {\mathbf{d} + r - k - 1 \choose r - k + 1} \geq 2$, which completes the proof.
\end{proof}
\begin{remark}
As Remark \ref{hypothesis} and the proof of Proposition \ref{final} jointly show, the inequality in that proposition's statement holds \textit{sharply}; that is, $R = 2 \cdot \dim G(r + 1, n + 1) - \delta\underscore - 1$ in fact holds in each case. This latter fact reflects the state of affairs whereby our Theorem \ref{main} is itself often sharp, as Debarre and Manivel note already in the rational case \cite[Rem.~3.6.~1)]{Debarre:1998aa}.
\end{remark}

\begin{remark}
We find it quite remarkable that the cohomology comparison claim \eqref{thingy} above---itself an indirect consequence of the Leray spectral sequence \cite[Lem.~3.6]{Tu:1989aa}---amounts exactly to \cite[(2.9)]{Debarre:1998aa}, an expression obtained using completely different means (and which further \textit{reappears} in the proof of \cite[Prop.~3.8]{Debarre:1998aa}). Undoubtedly, there is some deeper geometric story to tell here---say, a geometric interpretation of $F(k)$ for \textit{each} $k \in \{0, \ldots , r\}$---though we don't, admittedly, see it now.
\end{remark}

\begin{remark} \label{new}
Interestingly, the entire course of this paper continues to go through unchanged even in the case $r = 0$. In that case, the Grassmannian $G(r + 1, n + 1) = \mathbb{P}^n$ is simply $n$-dimensional projective space, and the Fano variety $F_r(X) = X$. Our main Theorem \ref{main}, in that case, recovers the weak Lefschetz theorem for intersections of ample divisors (see e.g.\ Lazarsfeld \cite[Rem.~3.1.32]{Lazarsfeld:2004aa}). Our work thus yields a new proof of that result; in fact, it realizes that result as a special case of our more general theorem, which allows the projective linear subspaces at issue---i.e., those contained in $X \subset \mathbb{P}^n$---to be arbitrary-dimensional (i.e., and in particular $0$-dimensional).
\end{remark}

\paragraph{Acknowledgements.} I would like to sincerely thank Claire Voisin for first suggesting to me the problem that grew into this work, as well as for helping me in many further ways.

% Tu \cite{Tu:1989aa} shows that, in the case $\mathbf{d} = (2)$, 

\printbibliography

\end{document}